\numberwithin{equation}{section}
\theoremstyle{plain}
\newtheorem{alg}[equation]{Algorithm}
\newtheorem{lemma}[equation]{Lemma}
\newtheorem{prop}[equation]{Proposition}
\newtheorem{thm}[equation]{Theorem}
\newtheorem{conj}[equation]{Conjecture}
\theoremstyle{definition}
\newcommand{\dlabel}[1]{\ifmmode \text{\ttfamily \upshape [#1] } \else
{\ttfamily \upshape [#1] }\fi \label{#1}}
\newcommand{\Aut}{\operatorname{Aut} }
\newcommand{\Hol}{\operatorname{Hol} }
\newcommand{\Symm}{\operatorname{Symm} }
\newcommand{\Reg}{\operatorname{Reg} }
\begin{document}
\setcounter{page}{1}
\title[Computing skew left braces of small orders]
{Computing skew left braces of small orders}

\author{Valeriy G. Bardakov}
\address{Sobolev Institute of Mathematics, pr. ak. Koptyuga 4, Novosibirsk, 630090, Russia  and Novosibirsk State University, Novosibirsk, 630090, Russia}
\email{bardakov@math.nsc.ru}

\author{Mikhail V. Neshchadim}
\address{Sobolev Institute of Mathematics, pr. ak. Koptyuga 4, Novosibirsk, 630090, Russia and Novosibirsk State University, Novosibirsk, 630090, Russia}
\email{neshch@math.nsc.ru}

\author{Manoj K. Yadav}
\address{Harish-Chandra Research Institute, HBNI, Chhatnag Road, Jhunsi, Allahabad-211 019, India}
\email{myadav@hri.res.in}

\subjclass[2010]{16T25, 81R50, 20B40}
\keywords{Skew left braces, Left braces, Regular subgroups, Yang-Baxter equation}

\begin{abstract}
We improve  Algorithm 5.1 of [Math. Comp. {\bf 86} (2017), 2519-2534] for computing all non-isomorphic skew left braces, and   enumerate left braces and skew left braces of  orders up to 868 with some exceptions. Using the enumerated data, we state some conjectures for further research.
\end{abstract}
\maketitle

\section{Introduction}
 
 A  triple $(G, +, \circ)$, where $(G, +)$ and $(G, \circ)$ are (not necessarily abelian) groups,    is said to be a \emph{skew left brace} if 
 \begin{equation}
 g_1 \circ (g_2 + g_3) =  (g_1 \circ g_2) - g_1 + (g_1 \circ g_3)
 \end{equation}
 for all $g_1, g_2, g_3 \in G$, where $- g_1$ denotes the  inverse of $g_1$ in $(G, +)$. We call  $(G, +)$ the \emph{additive group} and $(G, \circ)$ the \emph{multiplicative  group} of the skew left brace $(G, +, \circ)$. A skew left brace 
 $(G, +, \circ)$ is said to be a \emph{left brace} if $(G, +)$ is an abelian group. The concept of left braces was introduced by Rump \cite{R2007} in 2007 in connection with non-degenerate involutive set theoretic solutions of the quantum Yang-Baxter equations. Thereafter the subject received a tremendous attention of the mathematical community; see \cite{BCJO18, FC2018, WR2019, AS2018} and the references therein.  Interest in the study of set theoretic solutions of the quantum Yang-Baxter equations was intrigued by the paper \cite{D1992}  of Drinfeld, published in 1992.

 Let $X$ be an arbitrary set and $R : X \times X \to X \times X$  a bijective map. Recall that the pair  $(X, R)$ is said to be  a set theoretic solution of the Yang-Baxter equation if 
 $$R_{12}R_{23}R_{12} = R_{23}R_{12}R_{23}$$
holds in the set of all maps from $X \times X \times X$ to itself, where $R_{ij}$ is just $R$ acting on the $i$th and $j$th components of $X \times X \times X$ and identity on the
remaining one. Let us write 
$$R(x, y) = \big(\sigma_x(y), \tau_y(x)\big), ~ x, y \in X$$
with $\sigma_x$ and $\tau_y$ component  maps from  $X$ to itself. 

A solution $(X, R)$ is said to be non-degenerate if the component maps $\sigma_x$ and $\tau_y$  are bijections on $X$ for all $x, y \in X$. It is said to be involutive if $R^2$ is the identity map.  The study of non-degenerate set theoretic solutions of the  quantum Yang-Baxter equations has been extensively taken up, e. g.,  \cite{CJR10, PD2015, ESS99, GI2018, LV2016} to mention a few.

 The concept of skew left brace was introduced by Guarnieri and Vendramin \cite{GV2017} in 2017 in connection with non-involutive non-degenerate set theoretic solutions of the quantum Yang-Baxter equations. They invented an algorithm, by generalising a result of Bachiller \cite{DB2016} for computing all skew left braces of a given order.  They themselves computed left braces and skew left braces of lot of groups upto order 120. Vendramin \cite{LV2019} extended the number upto 168 with some exceptions. All these computations are done using computer algebra systems MAGMA \cite{magma} and GAP \cite{GAP} using the algorithm invented in \cite{GV2017}. For more work on skew braces see \cite{CSV19, LC2018, KN2019}.

 This article aims at filling up the gaps in the table produced in \cite{LV2019} to some extent and making further computations for larger orders.  An ingenious observation on regular subgroups of the holomorph of a given finite group allows us to improve the algorithm obtained in \cite{GV2017}, which substantially  enhances the performance of MAGMA computation. The improved algorithm, actually, avoids an expensive calculation in the existing algorithm. We compute the number of non-isomorphic left braces and skew left braces of orders upto 868 except certain cases (mainly when the order is a multiple of 32). These results settle \cite[Problem 13]{LV2019}  and \cite[Problem 6.1]{GV2017}.  The computations will help in building a database of left braces and skew left braces, which in turn will greatly enrich the library of solutions of the quantum Yang-Baxter equation. On the basis of our computation, we suggest some conjectures for further research.
 
  It is  striking that there are more than a million  skew brace structures of order $2^5$ and more than 20 millions  skew brace structures of order $3^5$. The reader will encounter many more surprises  while going through the tables. We have used MAGMA on a computer with 3.5 GHz 6-Core Intel Xeon E5 processor and 64 GB memory for these computations.

\section{Regular subgroups}

Let $G$ be a group, which acts on a set $X$. The action of an element $g \in G$ on an element $x \in X$ is denoted by $x^g$. A subgroup $H$ of $G$ is said to be \textit{action-closed} if for each pair $(g, x) \in G \times X$, there exists an element $h \in H$ such that $x^g = x^h$. By  \textit{$H$ - conjugacy class} of $x \in G$, we mean $\{x^h \mid h \in H\}$. For $g, h \in G$, we write the conjugate of $g$ by $h$ as $g^h = h^{-1}gh$.

Let $G$ be a  group and $\Symm(G)$ be the symmetric group on the set $G$. Recall that a subgroup  $\mathcal{G}$ of $\Symm(G)$ is said to be \textit{regular} if $\mathcal{G}$-action on $G$ is free and  transitive. By a free action we here mean that for any element $g \in G$, its stabilizer in $\mathcal{G}$ is the trivial subgroup. Observe that when $G$ is finite, any regular subgroup of $\Symm(G)$ is of order $|G|$. 

For a group $G$, $\Hol(G)$ denotes the holomorph of $G$, which is defined as the semidirect product of $G$ with $\Aut(G)$, the automorphism group of $G$. So
\[\Hol(G) := \Aut(G)   \ltimes G,\]
where the product in $\Hol(G)$ is given by
\[(\alpha, g)(\beta, h) = (\alpha \beta, g\alpha(h)).\]
Notice that $\Hol(G)$ acts on $G$ transitively under the following action:
\[g^{(\alpha, h)} = \pi_2\big((\alpha, h) (1, g)\big) = h\alpha(g)\]
for all $\alpha \in \Aut(G)$ and $g, h \in G$, where $\pi_2 : \Hol(G) \to G$ is the projection map given by $\pi_2\big( (\alpha, g)\big) = g$.   It follows that the stabilizer of any element of $G$ in $\Hol(G)$ is isomorphic to $\Aut(G)$. 

Let $\mathcal{G}$ be a regular subgroup of $\Hol(G)$. Then it is not difficult to see that for each $g \in G$, there exists a unique element $(\alpha, h) \in \mathcal{G}$ such that $g^{(\alpha, h)} = h\alpha(g) = 1$. Let $\Reg(G)$ denote the set of all regular subgroups of $\Hol(G)$. Then $\Hol(G)$ acts on $\Reg(G)$ by conjugation.  With this setting, we have
the following easy observation, which plays a key role in what follows.

\begin{lemma}\label{key-lemma}
$\Aut(G)$, as a subgroup of $\Hol(G)$, is action-closed  with respect to the conjugation action of $\Hol(G)$ on $\Reg(G)$.
\end{lemma}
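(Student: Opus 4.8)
The plan is to reduce the statement to the purely group-theoretic factorization
\[
\Hol(G) = N \cdot \Aut(G),
\]
valid for every regular subgroup $\mathcal{G} \le \Hol(G)$, where $N := \N_{\Hol(G)}(\mathcal{G})$ is the normalizer of $\mathcal{G}$ and $\Aut(G)$ is identified with the subgroup $\{(\alpha, 1) \mid \alpha \in \Aut(G)\}$ of $\Hol(G)$ --- which, as recalled above, is precisely the stabilizer of $1 \in G$ for the transitive action of $\Hol(G)$ on $G$. Granting this, action-closedness drops out at once: given $(\alpha, g) \in \Hol(G)$ and $\mathcal{G} \in \Reg(G)$, write $(\alpha, g) = n\,(\beta, 1)$ with $n \in N$ and $\beta \in \Aut(G)$; since $n^{-1} \mathcal{G}\, n = \mathcal{G}$,
\[
\mathcal{G}^{(\alpha, g)} = (\beta, 1)^{-1} \big( n^{-1} \mathcal{G}\, n \big) (\beta, 1) = (\beta, 1)^{-1} \mathcal{G}\, (\beta, 1) = \mathcal{G}^{(\beta, 1)},
\]
and $(\beta, 1)$ lies in the subgroup $\Aut(G)$, which is exactly what the definition of action-closed demands.

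To prove the factorization I would proceed in two short moves. First, I claim $N = \big(N \cap \Aut(G)\big) \cdot \mathcal{G}$: given $n \in N$, let $u \in G$ be defined by $1^n = u$; since $\mathcal{G}$ acts transitively on $G$ there is $\mu \in \mathcal{G}$ with $1^\mu = u$, so $n \mu^{-1}$ fixes $1$ and hence lies in $\Aut(G)$, while $n \mu^{-1} \in N$ because $\mu \in \mathcal{G} \le N$; thus $n = (n\mu^{-1})\, \mu \in (N \cap \Aut(G))\, \mathcal{G}$. Second, since $\mathcal{G}$ acts freely, the only element of $\mathcal{G}$ fixing $1$ is the identity, so $\mathcal{G} \cap \Aut(G) = 1$; combined with the first move this gives $|N| = |N \cap \Aut(G)| \cdot |\mathcal{G}| = |N \cap \Aut(G)| \cdot |G|$. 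Therefore the product set $N \cdot \Aut(G)$ has cardinality $|N|\,|\Aut(G)| / |N \cap \Aut(G)| = |G|\,|\Aut(G)| = |\Hol(G)|$, and being contained in $\Hol(G)$ it must equal $\Hol(G)$; finiteness of $G$ is used here, which is harmless since the paper works throughout with finite groups.

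One small point should be dispatched before all this, namely that conjugation genuinely maps $\Reg(G)$ into itself so that the action on $\Reg(G)$ is well defined --- but a conjugate in $\Hol(G)$ of a free and transitive subgroup is plainly again free and transitive, and the sentence preceding the statement already records this. The only step with real content is the factorization $N = (N \cap \Aut(G))\,\mathcal{G}$ together with the ensuing count: it is precisely there that both halves of the definition of a regular subgroup are consumed --- transitivity of $\mathcal{G}$ to manufacture the element $\mu$ reproducing the action of $n$ on the base point $1$, and freeness of $\mathcal{G}$ to make $\mathcal{G} \cap \Aut(G)$ trivial. Everything else is formal bookkeeping.
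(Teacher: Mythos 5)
Your argument is correct, but it packages the key point more heavily than the paper does. The paper's proof is essentially your ``first move'' applied to an arbitrary element of $\Hol(G)$ rather than to an element of the normalizer $N=\N_{\Hol(G)}(\mathcal{G})$: given $(\alpha,h)\in\Hol(G)$, transitivity of $\mathcal{G}$ supplies $(\alpha_1,h_1)\in\mathcal{G}$ with $h_1\alpha_1(h)=1$, so that $(\alpha_1,h_1)(\alpha,h)=(\beta,1)\in\Aut(G)$, and since conjugation by $(\alpha_1,h_1)\in\mathcal{G}$ fixes $\mathcal{G}$ one gets $\mathcal{G}^{(\alpha,h)}=\mathcal{G}^{(\beta,1)}$ at once; in other words, the factorization $\Hol(G)=\mathcal{G}\cdot\Aut(G)$ is read off directly from transitivity, with no need for the normalizer, the freeness of $\mathcal{G}$, the subgroup product formula, or the finiteness of $G$. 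Your detour through $N$ buys nothing extra (since $\mathcal{G}\le N$, the factorization $\Hol(G)=\mathcal{G}\,\Aut(G)$ already implies $\Hol(G)=N\,\Aut(G)$), while it costs the counting step and confines the argument to finite groups; its one merit is that it cleanly isolates the Frattini-type principle ``transitive subgroup times point stabilizer is the whole group.'' One small orientation slip: with the paper's conventions, $g^{(\alpha,h)}=h\alpha(g)$ and $(\alpha,g)(\beta,h)=(\alpha\beta,g\alpha(h))$, so that $g^{xy}=(g^{y})^{x}$; hence it is $\mu^{-1}n$, not $n\mu^{-1}$, that stabilizes $1$. This only changes your factorization to $N=\mathcal{G}\,(N\cap\Aut(G))$, from which $N=(N\cap\Aut(G))\,\mathcal{G}$ follows by taking inverses inside the group $N$, and none of the subsequent counting is affected.
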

\begin{proof}
 Let $\mathcal{G} \in \Reg(G)$ and $(\alpha, h) \in \Hol(G)$. Then there exists an element $(\alpha_1, h_1) \in \mathcal{G}$ such that $h^{(\alpha_1, h_1)} = h_1\alpha_1(h) = 1$. Notice  that 
$$(\alpha_1, h_1)(\alpha, h) = \big(\alpha_1 \alpha,  h_1\alpha_1(h)\big) = (\alpha_1 \alpha, 1).$$
Let $\beta := \alpha_1 \alpha$, which lies in $\Aut(G)$. Thus,
\[\mathcal{G}^{(\beta,1)} = \mathcal{G}^{(\alpha_1, h_1)(\alpha, h)} = \big(\mathcal{G}^{(\alpha_1, h_1)}\big)^{(\alpha, h)} = \mathcal{G}^{(\alpha, h)}.\]
Proof is now complete.
\end{proof}

The preceding lemma enables us to get the following generalization of  \cite[Proposition 4.3]{GV2017}.

\begin{thm}\label{thm}
Let $(G, +)$ be a group. Then non-isomorphic skew left braces  $(G, +, \circ)$ are in bijective correspondence with conjugacy classes of regular subgroups in $\Hol(G, +)$. Moreover, if $G$ is a $p$-group for some prime $p$, then non-isomorphic skew left brace structures over $G$ are in bijective correspondence with  $\Aut(G)$ - conjugacy classes of regular subgroups of any Sylow $p$-subgroup of $\Hol(G, +)$.
\end{thm}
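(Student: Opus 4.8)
The plan is to derive both assertions from Lemma~\ref{key-lemma} together with the correspondence already available in the literature. For the first assertion, I would recall that by \cite[Proposition~4.3]{GV2017} isomorphism classes of skew left braces with additive group $(G,+)$ are parametrised by the orbits of $\Aut(G,+)$, acting by conjugation, on the set $\Reg(G,+)$ of regular subgroups of $\Hol(G,+)$; concretely a skew brace $(G,+,\circ)$ corresponds to the regular subgroup $\{(\lambda_g,g) : g\in G\}$ with $\lambda_g(x) = -g + (g\circ x)$, and an isomorphism of skew braces is an element of $\Aut(G,+)$ conjugating the associated regular subgroups. I would then use the elementary observation that an action-closed subgroup has the same orbits as the ambient group: if $H\le K$ is action-closed for a $K$-action on a set $X$, then for each $x\in X$ one has $x^H = x^K$, since one inclusion is trivial and the reverse inclusion is exactly the definition of action-closedness. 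Applying this with $K=\Hol(G,+)$, $H=\Aut(G,+)$ and $X=\Reg(G,+)$, Lemma~\ref{key-lemma} yields that the $\Aut(G,+)$-orbits and the $\Hol(G,+)$-orbits on $\Reg(G,+)$ coincide, which gives the first assertion.

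For the second assertion I would proceed by a Sylow argument. Assume $(G,+)$ is a finite $p$-group, say $|G| = p^n$. Every regular subgroup of $\Hol(G,+)$ has order $p^n$ and is therefore a $p$-subgroup, so it lies in some Sylow $p$-subgroup of $\Hol(G,+)$; fix one Sylow $p$-subgroup $P$. Given $\mathcal{G}\in\Reg(G,+)$, Sylow's theorem provides $(\alpha,h)\in\Hol(G,+)$ with $\mathcal{G}^{(\alpha,h)}\le P$, and $\mathcal{G}^{(\alpha,h)}$ is again regular because conjugation by $\Hol(G,+)$ permutes $\Reg(G,+)$. By Lemma~\ref{key-lemma} the element $(\alpha,h)$ may be replaced by an automorphism, so in fact every regular subgroup of $\Hol(G,+)$ is $\Aut(G,+)$-conjugate to one contained in $P$. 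Consequently the inclusion of the set of regular subgroups of $P$ into $\Reg(G,+)$ induces a bijection from the set of $\Aut(G,+)$-conjugacy classes of regular subgroups of $P$ onto the set of $\Aut(G,+)$-conjugacy classes of regular subgroups of $\Hol(G,+)$ --- surjectivity is precisely what was just shown, and injectivity is immediate from the definitions. Composing this bijection with the first assertion (and using that $\Aut(G,+)$- and $\Hol(G,+)$-conjugacy on $\Reg(G,+)$ determine the same classes) produces the stated bijection; it is independent of the choice of $P$ because any two Sylow $p$-subgroups of $\Hol(G,+)$ are conjugate, or simply because both sides are in bijection with the isomorphism classes of skew left brace structures over $(G,+)$.

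The only delicate step is the passage from conjugacy by an arbitrary element of $\Hol(G,+)$ to conjugacy by an automorphism of $(G,+)$, and this is exactly the content of Lemma~\ref{key-lemma}; everything else is routine, using only that a regular subgroup of a finite group $G$ has order $|G|$ and that $\Hol(G,+)$ preserves $\Reg(G,+)$ under conjugation. I would also remark at the start of the $p$-group argument that $P$ does contain regular subgroups --- for instance a conjugate of the canonical copy $\{(1,g) : g\in G\}$ of $(G,+)$, which corresponds to the trivial skew brace $(G,+,+)$ --- so that the correspondence in the second assertion is not vacuous. I do not expect any genuine obstacle beyond this bookkeeping.
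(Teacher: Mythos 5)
Your proposal is correct and takes essentially the same route as the paper: the first assertion is \cite[Proposition 4.3]{GV2017} combined with Lemma \ref{key-lemma} (action-closedness making the $\Aut(G,+)$- and $\Hol(G,+)$-orbits on $\Reg(G,+)$ coincide), and the second is the same Sylow-theoretic reduction of an arbitrary regular subgroup to one inside a fixed Sylow $p$-subgroup, with Lemma \ref{key-lemma} converting the conjugating element into an automorphism. Your write-up merely makes explicit the surjectivity/injectivity bookkeeping and the independence of the chosen Sylow subgroup, which the paper leaves implicit.
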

\begin{proof}
The first assertion  follows from \cite[Proposition 4.3]{GV2017} along with Lemma \ref{key-lemma}.  Let $\mathcal{S}$ be a fixed Sylow $p$-subgroup of $\Hol(G, +)$ and $\mathcal{S}'$ any other Sylow $p$-subgroup of $\Hol(G, +)$. In the light of first assertion,  we only need to observe that  any regular subgroup of  $\mathcal{S}'$ lies in the $\Hol(G, +)$ - conjugacy class of some regular subgroup of $\mathcal{S}$. But this is obvious by Sylow theory.
\end{proof}

As a result, we get the following  algorithm which improves \cite[Algorithm 5.1]{GV2017}.

\begin{alg}\label{alg1}
For a finite group $(G, +)$, the following sequence of computations constructs all non-isomorphic skew left braces  $(G, +, \circ):$
\begin{enumerate}
\item Compute  $\Hol(G, +)$.
\item Compute the list of regular subgroups of $\Hol(G)$ of order $|G|$ up to conjugation.
\item For each representative $\mathcal{G}$ of regular subgroups of $\Hol(G)$, construct the map $\chi : G \to \mathcal{G}$ given by $g \mapsto \big(f, f(g)^{-1}\big)$, where $ \big(f, f(g)^{-1}\big) \in \mathcal{G}$. The triple $(\mathcal{G}, G, \chi)$ yields a skew left brace $(G, +, \circ)$ with $\circ: G \times G \to G$ given by $g_1 \circ g_2 = \chi^{-1}\big(\chi(g_1)\chi(g_2)\big)$ for all $g_1, g_2 \in G$.
\end{enumerate}
\end{alg}

As remarked in \cite{GV2017} too, for enumerating skew left braces with additive group $(G, +)$ we only need first two steps of this algorithm.

We also have the following algorithm for finite $p$-groups.

\begin{alg}\label{alg2}
For a finite $p$-group $(G, +)$, the following sequence of computations constructs all non-isomorphic skew left braces  $(G, +, \circ):$
\begin{enumerate}
\item Compute  $\Hol(G, +)$.
\item Compute a representative $\mathcal{S}$  of the conjugacy class of Sylow $p$-subgroups of  $\Hol(G, +)$.
\item Compute the list of regular subgroups of $\mathcal{S}$ of order $|G|$ up to conjugation by the elements of $\Aut(G)$.
\item For each representative $\mathcal{G}$ of regular subgroups of $\mathcal{S}$ under conjugation action of $\Aut(G)$, construct the map $\chi : G \to \mathcal{G}$ given by $g \mapsto \big(f, f(g)^{-1}\big)$, where $ \big(f, f(g)^{-1}\big) \in \mathcal{G}$. The triple $(\mathcal{G}, G, \chi)$ yields a skew left brace $(G, +, \circ)$ with $\circ: G \times G \to G$ given by $g_1 \circ g_2 = \chi^{-1}\big(\chi(g_1)\chi(g_2)\big)$ for all $g_1, g_2 \in G$.
\end{enumerate}
\end{alg}

 Notice that for enumerating skew left braces with finite additive $p$-group $(G, +)$ we  need first three steps of this algorithm. We conclude this section by reproducing the proof of the following fact.
 
 \begin{prop}
 Let $\mathcal{S}$ be a Sylow $p$-subgroup of $\Hol(G)$ of a finite $p$-group $G$. Then the union of $\Aut(G)$ - conjugacy classes of the regular subgroups of $\mathcal{S}$ constitutes the set of all regular subgroups of $\Hol(G)$.
 \end{prop}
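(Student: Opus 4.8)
The plan is to prove the stated equality of sets by establishing the two inclusions separately, the easy one first and the substantive one second. Write $|G| = p^n$. For the inclusion "$\supseteq$", I would note that $\Aut(G) \le \Hol(G)$ and that conjugation by any element of $\Hol(G)$, viewed inside $\Symm(G)$, sends a regular subgroup to a regular subgroup (a free and transitive action is carried to a free and transitive action). Hence every $\Aut(G)$-conjugate of a regular subgroup of $\mathcal{S}$ is again a regular subgroup of $\Hol(G)$, and the union on the left is contained in the set of all regular subgroups of $\Hol(G)$.

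For the reverse inclusion, let $\mathcal{H}$ be an arbitrary regular subgroup of $\Hol(G)$. Since $G$ is finite, $|\mathcal{H}| = |G| = p^n$, so $\mathcal{H}$ is a $p$-subgroup of $\Hol(G)$ and therefore lies inside some Sylow $p$-subgroup $\mathcal{S}'$ of $\Hol(G)$. By Sylow's theorem there is $t \in \Hol(G)$ with $\mathcal{S}' = \mathcal{S}^t$, and then $\mathcal{H}^{t^{-1}} \le \mathcal{S}$. Being a conjugate of $\mathcal{H}$, this subgroup $\mathcal{H}^{t^{-1}}$ is again regular, so it is a regular subgroup of $\mathcal{S}$.

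The only delicate point is that the conjugating element $t$ need not lie in $\Aut(G)$, while the proposition asks for $\Aut(G)$-conjugacy; this is exactly what Lemma \ref{key-lemma} repairs. Applying the action-closedness of $\Aut(G)$ with respect to the conjugation action of $\Hol(G)$ on $\Reg(G)$ to the pair $(t^{-1}, \mathcal{H})$ yields $\beta \in \Aut(G)$ with $\mathcal{H}^{t^{-1}} = \mathcal{H}^{(\beta,1)}$. Conjugating back by $(\beta^{-1},1) \in \Aut(G)$ then exhibits $\mathcal{H}$ as an $\Aut(G)$-conjugate of the regular subgroup $\mathcal{H}^{t^{-1}}$ of $\mathcal{S}$, which gives "$\subseteq$" and completes the proof. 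The main obstacle, such as it is, is recognizing that Sylow theory produces only $\Hol(G)$-conjugacy and that Lemma \ref{key-lemma} is precisely the mechanism for descending to $\Aut(G)$-conjugacy; the rest is routine bookkeeping with the convention $X^g = g^{-1}Xg$ and the fact that $|\mathcal{H}| = |G|$ for a regular subgroup of a permutation group on a finite set.
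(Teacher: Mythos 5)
Your proof is correct and follows essentially the same route as the paper's: note $|\mathcal{H}| = |G|$ is a $p$-power, place $\mathcal{H}$ in some Sylow $p$-subgroup, use Sylow's theorem to conjugate it into $\mathcal{S}$, and then invoke Lemma \ref{key-lemma} to replace the $\Hol(G)$-conjugator by one coming from $\Aut(G)$. The only cosmetic difference is that you appeal to the standard fact that conjugation within $\Symm(G)$ preserves regularity (a fact the paper also declares well known in the remark following the proposition), whereas the paper checks freeness of the conjugated subgroup by a direct computation and leaves transitivity as an exercise.
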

 \begin{proof}
 Let $\mathcal{G}$  be an arbitrary regular subgroup of $\Hol(G)$. Then $\mathcal{G}$, being of order $|G|$, is a subgroup of some Sylow $p$-subgroup $\mathcal{S}'$ of  $\Hol(G)$. By Sylow theory, we know that there exists an element $(\phi, y) \in \Hol(G)$ such that $\mathcal{S} = (\mathcal{S}')^{(\phi,y)}$. Thus $\mathcal{G}^{(\phi,y)}$ is a subgroup of $\mathcal{S}$. It follows from the proof of Lemma \ref{key-lemma} that   $\mathcal{G}^{(\phi,y)} = \mathcal{G}^{(\beta, 1)}$ for some $\beta \in \Aut(G)$. A routine calculation now shows that  $\mathcal{G}^{(\beta, 1)}$ is a regular subgroup of  $\mathcal{S}$. Indeed, if $x^{\big((\psi, z)^{(\beta,1)}\big)} = x$ for some $(\psi, z) \in \mathcal{G}$ and $x \in G$, then it follows that $\beta(x)^{(\psi, z)} = \beta(x)$, which is not possible. This proves that the action of  $\mathcal{G}^{(\beta, 1)}$ is free on $G$.  That the action is transitive, is left as an easy exercise,  and the proof is complete.
\end{proof}
 
 We remark that  on the lines of  proof of the preceding proposition, we can easily show that for an arbitrary finite group $G$, $\Hol(G)$ acts  on $\Reg(G)$ by conjugation. We have used this fact above without proof as it is well known.


\section{Computations}

Throughout this section, for a given positive integer $n$, $b(n)$ and $s(n)$, respectively, denote the total number of left braces and skew left braces of order $n$.  For each such $n$, $pf(n)$ stands for the prime factorization of $n$. Computations in this section are carried out  using Algorithm \ref{alg1}. The following table remedies some gaps in the list obtained in  \cite{LV2019}. 

\begin{table}[H]
\centering
\begin{small}
 \begin{tabular}{|c | c c c c c c c c|} 
 \hline
 $n$ & $32$ & $54$ & $64$ & $72$ &  $80$ & $81$ & $96$ & $108$ \\ 
 $b(n)$ & $25281$ & $80$ & $?$ & $489$ & $1985$ & $804$ & $195971$ & $494$ \\
 $s(n)$ & $1223061$ & $1028$ & $?$ & $17790$ & $74120$ & $8436$ & $?$ & $11223$ \\
 \hline
  $n$ & $112$ & $120$ & $126$ & $128$ &  $136$ & $144$ & $147$ & $150$\\ 
   $b(n)$ & $1671$ & $395$ & $36$ & $?$ & $108$ & $10215$ & $9$ & $19$\\
  $s(n)$ & $65485$ & $22711$ & $990$ & $?$ & $986$ & $3013486$ & $123$ & $401$\\
  \hline
 $n$  & $152$ & $158$ & $160$ & $162$ & $164$ & $165$ & $166$ & $168$\\
   $b(n)$  & $90$ & $2$ & $209513$ & $1374$ & $11$ & $2$ & $2$ & $443$\\
    $s(n)$  & $800$ & $6$ & $?$ & $45472$ & $43$ & $12$ & $6$ & $28505$\\
         \hline
\end{tabular}
\hspace{1cm}
\caption{Some missing values from \cite{LV2019}}\label{Table1}
\end{small}
\end{table}
    
 We now enumerate $b(n)$ and $s(n)$ for $n \le 868$ except some cases for which computations are too big to be handled by our computer.   We have given a lower bound on the number of skew left braces of order $3^5$, by taking into account the additive groups with Group Id's $[243, m]$, where $m = 1, \ldots,  31,  33,  37,  38,  40, 48, \ldots, 63, 65, 66,  67$. By the Group Id we mean the group identification of a group of given order in The Small Groups Library \cite{SmallGrp} implemented in GAP and MAGMA.
 
\begin{table}[H]
\centering
\begin{small}
 \begin{tabular}{|c | c c c c c c c c c c|} 
      \hline
 $n$ & $169$ & $170$ & $171$ & $172$ & $173$ & $174$ & $175$ & $176$ & $177$ & $178$\\
   $b(n)$ & $4$ & $4$ & $14$ & $9$ & $1$ & $4$ & $4$ & $1670$ & $1$ & $2$\\
    $s(n)$ & $4$ & $36$ & $80$ & $29$ & $1$ & $36$ & $4$ & $65466$ & $1$ & $6$\\
    \hline
 $n$ & $179$ & $180$ & $181$ & $182$ & $183$ & $184$ & $185$ & $186$ & $187$ & $188$\\
   $b(n)$ & $1$ & $129$ & $1$ & $4$ & $2$ & $90$ & $1$ & $6$ & $1$ & $9$\\
    $s(n)$ & $1$ & $5849$ & $1$ & $36$ & $8$ & $800$ & $1$ & $78$ & $1$ & $29$\\
    \hline
 $n$ & $189$ & $190$ & $191$ & $192$ & $193$ & $194$ & $195$ & $196$ & $197$ & $198$\\
   $b(n)$ & $165$ & $4$ & $1$ & $?$ & $1$ & $2$ & $2$ & $41$ & $1$ & $16$\\
    $s(n)$ & $4560$ & $36$ & $1$ & $?$ & $1$ & $6$ & $8$ & $389$ & $1$ & $294$\\
      \hline
 $n$ & $199$ & $200$ & $201$ & $202$ & $203$ & $204$ & $205$ & $206$ & $207$ & $208$\\
   $b(n)$ & $1$ & $568$ & $2$ & $2$ & $2$ & $28$ & $2$ & $2$ & $4$ & $1984$\\
    $s(n)$ & $1$ & $23471$ & $8$ & $6$ & $16$ & $410$ & $12$ & $6$ & $4$ & $74104$\\
      \hline
 $n$ & $209$ & $210$ & $211$ & $212$ & $213$ & $214$ & $215$ & $216$ & $217$ & $218$\\
   $b(n)$ & $1$ & $12$ & $1$ & $11$ & $1$ & $2$ & $1$ & $5308$ & $1$ & $2$\\
    $s(n)$ & $1$ & $468$ & $1$ & $43$ & $1$ & $6$ & $1$ & $523768$ & $1$ & $6$\\
      \hline
 $n$ & $219$ & $220$ & $221$ & $222$ & $223$ & $224$ & $225$ & $226$ & 227$$ & $228$\\
   $b(n)$ & $2$ & $36$ & $1$ & $6$ & $1$ & $195483$ & $21$ & $2$ & $1$ & $34$ \\
    $s(n)$ & $8$ & $702$ & $1$ & $78$ & $1$ & $?$ & $61$ & $6$ & $1$ & $606$\\
      \hline
 $n$ & $229$ & $230$ & $231$ & $232$ & $233$ & $234$ & $235$ & $256$ & $237$ & $238$\\
   $b(n)$ & $1$ & $4$ & $2$ & $106$ & $1$ & $36$ & $1$ & $9$ & $2$ & $4$\\
    $s(n)$ & $1$ & $36$ & $8$ & $944$ & $1$ & $990$ & $1$ & $29$ & $8$ & $36$\\
      \hline
 $n$ & $239$ & $240$ & $241$ & $242$ & $243$ & $244$ & $245$ & $246$ & $247$ & $248$\\
   $b(n)$ & $1$ & $10518$ & $1$ & $8$ & $598065$ & $11$ & $4$ & $4$ & $1$ & $90$\\
    $s(n)$ & $1$ & $4642485$ & $1$ & $57$ & $> 27447027$ & $43$ & $4$ & $36$ & $1$ & $800$\\
      \hline
 $n$ & $249$ & $250$ & $251$ & $252$ & $253$ & $254$ & $255$ & $256$ & $257$ & $258$\\
   $b(n)$ & $1$ & $104$ & $1$ & $229$ & $2$ & $2$ & $1$ & $?$ & $1$ & $6$\\
    $s(n)$ & $1$ & $1492$ & $1$ & $11541$ & $24$ & $6$ & $1$ & $?$ & $1$ & $78$\\
      \hline
 $n$ & $259$ & $260$ & $261$ & $262$ & $263$ & $264$ & $265$ & $266$ & $267$ & $268$\\
   $b(n)$ & $1$ & $35$ & $4$ & $2$ & $1$ & $345$ & $1$ & $4$ & $1$ & $9$\\
    $s(n)$ & $1$ & $739$ & $4$ & $6$ & $1$ & $20231$ & $1$ & $36$ & $1$ & $29$\\
      \hline
 $n$ & $269$ & $270$ & $271$ & $272$ & $273$ & $274$ & $275$ & $276$ & $277$ & $278$\\
   $b(n)$ & $1$ & $160$ & $1$ & $2014$ & $5$ & $2$ & $13$ & $24$ & $1$ & $2$\\
    $s(n)$ & $1$ & $6168$ & $1$ & $74960$ & $113$ & $6$ & $93$ & $324$ & $1$ & $6$\\
      \hline
 $n$ & $279$ & $280$ & $281$ & $282$ & $283$ & $284$ & $285$ & $286$ & $287$ & $288$\\
   $b(n)$ & $11$ & $385$ & $1$ & $4$ & $1$ & $9$ & $2$ & $4$ & $1$ & $1392959$\\
    $s(n)$ & $47$ & $22295$ & $1$ & $36$ & $1$ & $29$ & $8$ & $36$ & $1$ & $?$\\
    \hline
    
    $n$ & $289$ & $290$ & $291$ & $292$ & $293$ & $294$ & $295$ & $296$ & $297$ & $298$\\
   $b(n)$ & $4$ & $4$ & $2$ & $11$ & $1$ & $31$ & $1$ & $106$ & $37$ & $2$\\ 
  $s(n)$ & $4$ & $36$ & $8$ & $43$ & $1$ & $2152$ & $1$ & $944$ & $101$ & $6$\\
    \hline

 $n$ & $299$ & $300$ & $301$ & $302$ & $303$ & $304$ & $305$ & $306$ & $307$ & $308$\\
  $b(n)$ & $1$ & $152$ & $2$ & $2$ & $1$ & $1670$ & $2$ & 16$$ & $1$ & $23$\\ 
  $s(n)$ & $1$ & $8222$ & $16$ & $6$ & $1$ & $65466$ & $12$ & $294$ & $1$ & $311$\\
    \hline
    
   $n$ & $309$ & $310$ & $311$ & $312$ & $313$ & $314$ & $315$ & $316$ & $317$ & $318$\\
  $b(n)$ & $2$ & $6$ & $1$ & $507$ & $1$ & $2$ & $11$ & $9$ & $1$ & $4$\\ 
  $s(n)$ & $8$ & $94$ & $1$ & $32075$ & $1$ & $6$ & $47$ & $29$ & $1$ & $36$\\
   \hline
$n$ & $319$ & $320$ & $321$ & $322$ & $323$ & $324$ & $325$ & $326$ & $327$ & $328$\\ 
  $b(n)$ & $1$ & $?$ & $1$ & $4$ & $1$ & $10225$ & $4$ & $2$ & $2$ & $108$\\ 
  $s(n)$ & $1$ & $?$ & $1$ & $36$ & $1$ & $?$ & $4$ & $6$ & $8$ & $986$\\
   \hline
$n$ & $329$ & $330$ & $331$ & $332$ & $333$ & $334$ & $335$ & $336$ & $337$ & $338$\\
  $b(n)$ & $1$ & $12$ & $1$ & $9$ & $14$ & $2$ & $1$ & $10990$ & $1$ & $8$\\ 
  $s(n)$ & $1$ & $564$ & $1$ & $29$ & $80$ & $6$ & $1$ & $5247711$ & $1$ & $59$\\
   \hline
$n$ & $339$ & $340$ & $341$ & $342$ & $343$ & $344$ & $345$ & $346$ & $347$ & $348$\\
  $b(n)$ & $1$ & $35$ & $1$ & $42$ & $61$ & $90$ & $1$ & $2$ & $1$ & $28$\\ 
  $s(n)$ & $1$ & $739$ & $1$ & $1164$ & $373$ & $800$ & $1$ & $6$ & $1$ & $410$\\
  \hline
 
\end{tabular}
\hspace{1cm}
\caption{Further Computations}\label{Table1}
\end{small}
\end{table}

\begin{table}[H]
\centering
\begin{small}
 \begin{tabular}{|c | c c c c c c c c c c|}  
   \hline
$n$ & $349$ & $350$ & $351$ & $352$ & $353$ & $354$ & $355$ & $356$ & $357$ & $358$\\ 
  $b(n)$ & $1$ & $16$ & $166$ & $195479$ & $1$ & $4$ & $2$ & $11$ & $2$ & $2$\\ 
  $s(n)$ & $1$ & $306$ & $4591$ & $?$ & $1$ & $36$ & $12$ & $43$ & $8$ & $6$\\
   \hline
$n$ & $359$ & $360$ & $361$ & $362$ & $363$ & $364$ & $365$ & $366$ & $367$ & $368$\\ 
  $b(n)$ & $1$ & $2035$ & $4$ & $2$ & $5$ & $27$ & $1$ & $6$ & $1$ & $1670$\\ 
  $s(n)$ & $1$ & $535713$ & $4$ & $6$ & $20$ & $395$ & $1$ & $78$ & $1$ & $65466$\\
   \hline
$n$ & $369$ & $370$ & $371$ & $372$ & $373$ & $374$ & $375$ & $376$ & $377$ & $378$\\
  $b(n)$ & $4$ & $4$ & $1$ & $34$ & $1$ & $4$ & $54$ & $90$ & $1$ & $548$\\ 
  $s(n)$ & $4$ & $36$ & $1$ & $606$ & $1$ & $36$ & $253$ & $800$ & $1$ & $47244$\\
   \hline
$n$ & $379$ & $380$ & $381$ & $382$ & $383$ & $384$ & $385$ & $386$ & $387$ & $388$\\
  $b(n)$ & $1$ & $27$ & $2$ & $2$ & $1$ & $?$ & $2$ & $2$ & $11$ & $11$\\ 
  $s(n)$ & $1$ & $395$ & $8$ & $6$ & $1$ & $?$ & $12$ & $6$ & $47$ & $43$\\
   \hline
$n$ & $389$ & $390$ & $391$ & $392$ & $393$ & $394$ & $395$ & $396$ & $397$ & $398$\\
  $b(n)$ & $1$ & $12$ & $1$ & $463$ & $1$ & $2$ & $1$ & $111$ & $1$ & $2$\\ 
  $s(n)$ & $1$ & $468$ & $1$ & $18078$ & $1$ & $6$ & $1$ & $4985$ & $1$ & $6$\\
   \hline
$n$ & $399$ & $400$ & $401$ & $402$ & $403$ & $404$ & $405$ & $406$ & $407$ & $408$\\
  $b(n)$ & $5$ & $12744$ & $1$ & $6$ & $1$ & $11$ & $805$ & $6$ & $1$ & $399$\\ 
  $s(n)$ & $113$ & $3618636$ & $1$ & $78$ & $1$ & $43$ & $8453$ & $110$ & $1$ & $22923$\\

 \hline
 $n$ & $409$ & $410$ & $411$ & $412$ & $413$ & $414$ & $415$ & $416$ & $417$ & $418$\\
  $b(n)$ & $1$ & $6$ & $1$ & $9$ & $1$ & $16$ & $1$ & $209507$ & $2$ & $4$\\ 
  $s(n)$ & $1$ & $94$ & $1$ & $29$ & $1$ & $294$ & $1$ & $?$ & $8$ & $36$\\

 \hline
  $n$ & $419$ & $420$ & $421$ & $422$ & $423$ & $424$ & $425$ & $426$ & $427$ & $428$\\ 
  $b(n)$ & $1$ & $104$ & $1$ & $2$ & $4$ & $106$ & $4$ & $4$ & $1$ & $9$\\ 
  $s(n)$ & $1$ & $9052$ & $1$ & $6$ & $4$ & $944$ & $4$ & $36$ & $1$ & $29$\\

 \hline
 $n$ & $429$ & $430$ & $431$ & $432$ & $433$ & $434$ & $435$ & $436$ & $437$ & $438$\\ 
  $b(n)$ & $2$ & $4$ & $1$ & $115708$ &  $1$ & $4$  & $1$ & $11$ & $1$  & $6$\\ 
  $s(n)$ & $8$ & $36$ & $1$ & $?$ &  $1$ & $36$ &$1$ & $43$ & $1$ & $78$\\

 \hline
  $n$ & $439$ & $440$ & $441$ & $442$ & $443$ & $444$ & $445$ & $446$ & $447$ & $448$\\
  $b(n)$ & $1$ & $474$ & $55$ & $4$ & $1$ & $40$ & $1$ & $2$ & $1$ & $?$\\ 
  $s(n)$ & $1$ & $31970$ & $1110$ & $36$ & $1$ & $782$ & $1$ & $6$ & $1$ & $?$\\
  
   \hline
 $n$ & $449$ & $450$ & $451$ & $452$ & $453$ & $454$ & $455$ & $456$ & $457$ & $458$\\
  $b(n)$ & $1$ & $82$ & $1$ & $11$ & $2$ & $2$ & $1$ & $441$ & $1$ & $2$\\ 
  $s(n)$ & $1$ & $3797$ & $1$ & $43$ & $8$ & $6$ & $1$ & $28447$ & $1$ & $6$\\

\hline
 $n$ & $459$ & $460$ & $461$ & $462$ & $463$ & $464$ & $465$ & $466$ & $467$ & $468$\\
  $b(n)$ & $37$ & $27$ & $1$ & $12$ & $1$ & $1984$ & $4$ & $2$ & $1$ & $267$\\ 
  $s(n)$ & $101$ & $395$ & $1$ & $468$ & $1$ & $74104$ & $66$ & $6$ & $1$ & $13941$\\
  
  \hline
 $n$ & $469$ & $470$ & $471$ & $472$ & $473$ & $474$ & $475$ & $476$ & $477$ & $478$\\
  $b(n)$ & $1$ & $4$ & $2$ & $90$ & $1$ & $6$ & $4$ & $27$ & $4$ & $2$\\ 
  $s(n)$ & $1$ & $36$ & $8$ & $800$ & $1$ & $78$ & $4$ & $395$ & $4$ & $6$\\
  
  \hline
 $n$ & $479$ & $480$ & $481$ & $482$ & $483$ & $484$ & $485$ & $486$ & $487$ & $488$\\
  $b(n)$ & $1$ & $?$ & $1$ & $2$ & $2$ & $41$ & $1$ & $639775$ & $1$ & $106$\\ 
  $s(n)$ & $1$ & $?$ & $1$ & $6$ & $8$ & $421$ & $1$ & $?$ & $1$ & $944$\\
  
  \hline
 $n$ & $489$ & $490$ & $491$ & $492$ & $493$ & $494$ & $495$ & $496$ & $497$ & $498$\\
  $b(n)$ & $2$ & $16$ & $1$ & $28$ & $1$ & $4$ & $8$ & $1670$ & $2$ & $4$\\ 
  $s(n)$ & $8$ & $318$ & $1$ & $410$ & $1$ & $36$ & $48$ & $65466$ & $16$ & $36$\\
  
  \hline
 $n$ & $499$ & $500$ & $501$ & $502$ & $503$ & $504$ & $505$ & $506$ & $507$ & $508$\\
  $b(n)$ & $1$ & $634$ & $1$ & $2$ & $1$ & $3249$ & $2$ & $6$ & $9$ & $9$\\ 
  $s(n)$ & $1$ & $21252$ & $1$ & $6$ & $1$ & $871013$ & $12$ & $142$ & $135$ & $29$\\
  
  \hline
 $n$ & $509$ & $510$ & $511$ & $512$ & $513$ & $514$ & $515$ & $516$ & $517$ & $518$\\
  $b(n)$ & $1$ & $8$ & $1$ & $?$ & $189$ & $2$ & $1$ & $34$ & $1$ & $4$\\ 
  $s(n)$ & $1$ & $216$ & $1$ & $?$ & $5055$ & $6$ & $1$ & $606$ & $1$ & $36$\\

 \hline
  $n$ & $519$ & $520$ & $521$ & $522$ & $523$ & $524$ & $525$ & $526$ & $527$ & $528$\\
  $b(n)$ & $1$ & $484$ & $1$ & $16$ & $1$ & $9$ & $10$ & $2$ & $1$ & $9274$\\ 
  $s(n)$ & $1$ & $28714$ & $1$ & $294$ & $1$ & $29$ & $112$ & $6$ & $1$ & $4381956$\\ 

 \hline
\end{tabular}
\hspace{1cm}
\caption{Further Computations}\label{Table1}
\end{small}
\end{table}

\begin{table}[H]
\centering
\begin{small}
 \begin{tabular}{|c | c c c c c c c c c c|}

   \hline
  $n$ & $529$ & $530$ & $531$ & $532$ & $533$ & $534$ & $535$ & $536$ & $537$ & $538$\\ 
  $b(n)$ & $4$ & $4$ & $4$ & $23$ & $1$ & $4$ & $1$ & $90$ & $1$ & $2$\\ 
  $s(n)$ & $4$ & $36$ & $4$ & $311$ & $1$ & $36$ & $1$ & $800$ & $1$ & $6$\\ 
  
   \hline
  $n$ & $539$ & $540$ & $541$ & $542$ & $543$ & $544$ & $545$ & $546$ & $547$ & $548$\\
  $b(n)$ & $4$ & $1342$ & $1$ & $2$ & $2$ & $210043$ & $1$ & $24$ & $1$ & $11$\\ 
  $s(n)$ & $4$ & $148151$ & $1$ & $6$ & $8$ & $?$ & $1$ & $2664$ & $1$ & $43$\\ 
  
   \hline
  $n$ & $549$ & $550$ & $551$ & $552$ & $553$ & $554$ & $555$ & $556$ & $557$ & $558$\\
  $b(n)$ & $11$ & $40$ & $1$ & $345$ & $1$ & $2$ & $2$ & $9$ & $1$ & $36$\\ 
  $s(n)$ & $47$ & $1370$ & $1$ & $20231$ & $1$ & $6$ & $8$ & $29$ & $1$ & $990$\\ 
  
   \hline
  $n$ & $559$ & $560$ & $561$ & $562$ & $563$ & $564$ & $565$ & $566$ & $567$ & $568$\\
  $b(n)$ & $1$ & $10423$ & $1$ & $2$ & $1$ & $24$ & $1$ & $2$ & $7196$ & $90$\\ 
  $s(n)$ & $1$ & $4633376$ & $1$ & $6$ & $1$ & $324$ & $1$ & $6$ & $2253564$ & $800$\\ 
  
   \hline
  $n$ & $569$ & $570$ & $571$ & $572$ & $573$ & $574$ & $575$ & $576$ & $577$ & $578$\\
  $b(n)$ & $1$ & $12$ & $1$ & $27$ & $1$ & $4$ & $4$ & $?$ & $1$ & $8$\\ 
  $s(n)$ & $1$ & $468$ & $1$ & $395$ & $1$ & $36$ & $4$ & $?$ & $1$ & $63$\\ 
  
   \hline
  $n$ & $579$ & $580$ & $581$ & $582$ & $583$ & $584$ & $585$ & $586$ & $587$ & $588$\\
  $b(n)$ & $2$ & $35$ & $1$ & $6$ & $1$ & $108$ & $11$ & $2$ & $1$ & $202$\\ 
  $s(n)$ & $8$ & $739$ & $1$ & $78$ & $1$ & $986$ & $47$ & $6$ & $1$ & $21836$\\ 
  
   \hline
  $n$ & $589$ & $590$ & $591$ & $592$ & $593$ & $594$ & $595$ & $596$ & $597$ & $598$\\
  $b(n)$ & $1$ & $4$ & $1$ & $1984$ & $1$ & $160$ & $1$ & $11$ & $2$ & $4$\\ 
  $s(n)$ & $1$ & $36$ & $1$ & $74104$ & $1$ & $6168$ & $1$ & $43$ & $8$ & $36$\\ 

 \hline
  $n$ & $599$ & $600$ & $601$ & $602$ & $603$ & $604$ & $605$ & $606$ & $607$ & $608$\\
  $b(n)$ & $1$ & $2413$ & $1$ & $6$ & $11$ & $9$ & $10$ & $4$ & $1$ & $195479$\\ 
  $s(n)$ & $1$ & $659897$ & $1$ & $110$ & $47$ & $29$ & $409$ & $36$ & $1$ & $?$\\ 
  
   \hline
  $n$ & $609$ & $610$ & $611$ & $612$ & $613$ & $614$ & $615$ & $616$ & $617$ & $618$\\
  $b(n)$ & $3$ & $6$ & $1$ & $129$ & $1$ & $2$ & $2$ & $335$ & $1$ & $6$\\ 
  $s(n)$ & $25$ & $94$ & $1$ & $5835$ & $1$ & $6$ & $12$ & $19885$ & $1$ & $78$\\ 
  
   \hline
  $n$ & $619$ & $620$ & $621$ & $622$ & $623$ & $624$ & $625$ & $626$ & $627$ & $628$\\
  $b(n)$ & $1$ & $36$ & $37$ & $2$ & $1$ & $12547$ & $2308$ & $2$ & $2$ & $11$\\ 
  $s(n)$ & $1$ & $702$ & $101$ & $6$ & $1$ & $5595183$ & $69032$ & $6$ & $8$ & $43$\\ 
  
   \hline
  $n$ & $629$ & $630$ & $631$ & $632$ & $633$ & $634$ & $635$ & $636$ & $637$ & $638$\\
  $b(n)$ & $1$ & $72$ & $1$ & $90$ & $2$ & $2$ & $1$ & $28$ & $4$ & $4$\\ 
  $s(n)$ & $1$ & $5940$ & $1$ & $800$ & $8$ & $6$ & $1$ & $410$ & $4$ & $36$\\ 
  
   \hline
  $n$ & $639$ & $640$ & $641$ & $642$ & $643$ & $644$ & $645$ & $646$ & $647$ & $648$\\
  $b(n)$ & $4$ & $?$ & $1$ & $4$ & $1$ & $23$ & $2$ & $4$ & $1$ & $91071$\\ 
  $s(n)$ & $4$ & $?$ & $1$ & $36$ & $1$ & $311$ & $8$ & $36$ & $1$ & $?$\\ 
  
   \hline
  $n$ & $649$ & $650$ & $651$ & $652$ & $653$ & $654$ & $655$ & $656$ & $657$ & $658$\\
  $b(n)$ & $1$ & $16$ & $5$ & $9$ & $1$ & $6$ & $2$ & $2010$ & $14$ & $4$\\ 
  $s(n)$ & $1$ & $306$ & $113$ & $29$ & $1$ & $78$ & $12$ & $74860$ & $80$ & $36$\\ 
  
   \hline
  $n$ & $659$ & $660$ & $661$ & $662$ & $663$ & $664$ & $665$ & $666$ & $667$ & $668$\\
  $b(n)$ & $1$ & $100$ & $1$ & $2$ & $2$ & $90$ & $1$ & $42$ & $1$ & $9$\\ 
  $s(n)$ & $1$ & $9346$ & $1$ & $6$ & $8$ & $800$ & $1$ & $1164$ & $1$ & $29$\\ 
  
   \hline
  $n$ & $669$ & $670$ & $671$ & $672$ & $673$ & $674$ & $675$ & $676$ & $677$ & $678$\\
  $b(n)$ & $2$ & $4$ & $1$ & $?$ & $1$ & $2$ & $232$ & $51$ & $1$ & $4$\\ 
  $s(n)$ & $8$ & $36$ & $1$ & $?$ & $1$ & $6$ & $3682$ & $791$ & $1$ & $36$\\ 
  
   \hline
  $n$ & $679$ & $680$ & $681$ & $682$ & $683$ & $684$ & $685$ & $686$ & $687$ & $688$\\
  $b(n)$ & $1$ & $492$ & $1$ & $4$ & $1$ & $259$ & $1$ & $128$ & $2$ & $1670$\\ 
  $s(n)$ & $1$ & $29698$ & $1$ & $36$ & $1$ & $12723$ & $1$ & $2084$ & $8$ & $65466$\\ 
  
   \hline
  $n$ & $689$ & $690$ & $691$ & $692$ & $693$ & $694$ & $695$ & $696$ & $697$ & $698$\\
  $b(n)$ & $2$ & $8$ & $1$ & $11$ & $11$ & $2$ & $1$ & $395$ & $1$ & $2$\\ 
  $s(n)$ & $28$ & $216$ & $1$ & $43$ & $47$ & $6$ & $1$ & $22667$ & $1$ & $6$\\ 
  
   \hline
  $n$ & $699$ & $700$ & $701$ & $702$ & $7703$ & $704$ & $705$ & $706$ & $707$ & $708$\\
  $b(n)$ & $1$ & $126$ & $1$ & $550$ & $1$ & $?$ & $1$ & $2$ & $1$ & $24$\\ 
  $s(n)$ & $1$ & $7102$ & $1$ & $47374$ & $1$ & $?$ & $1$ & $6$ & $1$ & $324$\\

  \hline
\end{tabular}
\hspace{1cm}
\caption{Further Computations}\label{Table1}
\end{small}
\end{table}

\begin{table}[H]
\centering
\begin{small}
 \begin{tabular}{|c | c c c c c c c c c c|} 
 \hline
  $n$ & $709$ & $710$ & $711$ & $712$ & $713$ & $714$ & $715$ & $716$ & $717$ & $718$\\
  $b(n)$ & $1$ & $6$ & $11$ & $108$ & $1$ & $12$ & $2$ & $9$ & $1$ & $2$\\ 
  $s(n)$ & $1$ & $94$ & $47$ & $986$ & $1$ & $468$ & $12$ & $29$ & $1$ & $6$\\ 
  
   \hline
  $n$ & $719$ & $720$ & $721$ & $722$ & $723$ & $724$ & $725$ & $726$ & $727$ & $728$\\
  $b(n)$ & $1$ & $65074$ & $1$ & $8$ & $2$ & $11$ & $4$ & $19$ & $1$ & $385$\\ 
  $s(n)$ & $1$ & $?$ & $1$ & $65$ & $8$ & $43$ & $4$ & $466$ & $1$ & $22295$\\  

\hline
  $n$ & $729$ & $730$ & $731$ & $732$ & $733$ & $734$ & $735$ & $736$ & $737$ & $738$\\
  $b(n)$ & $?$ & $4$ & $1$ & $40$ & $1$ & $2$ & $9$ & $195479$ & $2$ & $16$\\ 
  $s(n)$ & $?$ & $36$ & $1$ & $782$ & $1$ & $6$ & $123$ & $?$ & $24$ & $294$\\ 
  
   \hline
  $n$ & $739$ & $740$ & $741$ & $742$ & $743$ & $744$ & $745$ & $746$ & $747$ & $748$\\
  $b(n)$ & $1$ & $35$ & $5$ & $4$ & $1$ & $441$ & $1$ & $2$ & $4$ & $27$\\ 
  $s(n)$ & $1$ & $739$ & $113$ & $36$ & $1$ & $28447$ & $1$ & $6$ & $4$ & $395$\\ 

 \hline
  $n$ & $749$ & $750$ & $751$ & $752$ & $753$ & $754$ & $755$ & $756$ & $757$ & $758$\\
  $b(n)$ & $1$ & $224$ & $1$ & $1670$ & $1$ & $4$ & $2$ & $3757$ & $1$ & $2$\\ 
  $s(n)$ & $1$ & $10001$ & $1$ & $65466$ & $1$ & $36$ & $12$ & $794193$ & $1$ & $6$\\ 

 \hline
  $n$ & $759$ & $760$ & $761$ & $762$ & $763$ & $764$ & $765$ & $766$ & $767$ & $768$\\
  $b(n)$ & $2$ & $384$ & $1$ & $6$ & $1$ & $9$ & $4$ & $2$ & $1$ & $?$\\ 
  $s(n)$ & $24$ & $22278$ & $1$ & $78$ & $1$ & $29$ & $4$ & $6$ & $1$ & $?$\\ 

 \hline
  $n$ & $769$ & $770$ & $771$ & $772$ & $773$ & $774$ & $775$ & $776$ & $777$ & $778$\\
  $b(n)$ & $1$ & $12$ & $1$ & $11$ & $1$ & $36$ & $13$ & $108$ & $5$ & $2$\\ 
  $s(n)$ & $1$ & $564$ & $1$ & $43$ & $1$ & $990$ & $93$ & $986$ & $113$ & $6$\\ 

 \hline
  $n$ & $779$ & $780$ & $781$ & $782$ & $783$ & $784$ & $785$ & $786$ & $787$ & $788$\\ 
  $b(n)$ & $1$ & $128$ & $1$ & $4$ & $37$ & $9998$ & $1$ & $4$ & $1$ & $11$\\ 
  $s(n)$ & $1$ & $13320$ & $1$ & $36$ & $101$ & $3074483$ & $1$ & $36$ & $1$ & $43$\\ 

 \hline
  $n$ & $789$ & $790$ & $791$ & $792$ & $793$ & $794$ & $795$ & $796$ & $797$ & $798$\\
  $b(n)$ & $1$ & $4$ & $2$ & $1771$ & $1$ & $2$ & $1$ & $9$ & $1$ & $24$\\ 
  $s(n)$ & $1$ & $36$ & $16$ & $484183$ & $1$ & $6$ & $1$ & $29$ & $1$ & $2664$\\ 

 \hline
  $n$ & $799$ & $800$ & $801$ & $802$ & $803$ & $804$ & $805$ & $806$ & $807$ & $808$\\
  $b(n)$ & $1$ & $?$ & $4$ & $2$ & $1$ & $34$ & $1$ & $4$ & $1$ & $106$\\ 
  $s(n)$ & $1$ & $?$ & $4$ & $6$ & $1$ & $606$ & $1$ & $36$ & $1$ & $944$\\ 

 \hline
  $n$ & $809$ & $810$ & $811$ & $812$ & $813$ & $814$ & $815$ & $816$ & $817$ & $818$\\
  $b(n)$ & $1$ & $2751$ & $1$ & $38$ & $2$ & $4$ & $1$ & $10604$ & $1$ & $2$\\ 
  $s(n)$ & $1$ & $272960$ & $1$ & $920$ & $8$ & $36$ & $1$ & $4658179$ & $1$ & $6$\\ 

 \hline
  $n$ & $819$ & $820$ & $821$ & $822$ & $823$ & $824$ & $825$ & $826$ & $827$ & $828$\\
  $b(n)$ & $41$ & $46$ & $1$ & $4$ & $1$ & $90$ & $14$ & $4$ & $1$ & $111$\\ 
  $s(n)$ & $1337$ & $1212$ & $1$ & $36$ & $1$ & $800$ & $105$ & $36$ & $1$ & $4985$\\ 
  
 \hline
  $n$ & $829$ & $830$ & $831$ & $832$ & $833$ & $834$ & $835$ & $836$ & $837$ & $838$\\
  $b(n)$ & $1$ & $4$ & $2$ & $?$ & $4$ & $6$ & $1$ & $23$ & $165$ & $2$\\ 
  $s(n)$ & $1$ & $36$ & $8$ & $?$ & $4$ & $78$ & $1$ & $311$ & $4560$ & $6$\\ 

 \hline
  $n$ & $839$ & $840$ & $841$ & $842$ & $843$ & $844$ & $845$ & $846$ & $847$ & $848$\\
  $b(n)$ & $1$ & $1933$ & $4$ & $2$ & $1$ & $9$ & $4$ & $16$ & $4$ & $1984$ \\ 
  $s(n)$ & $1$ & $878779$ & $4$ & $6$ & $1$ & $27$ & $4$ & $294$ & $4$ & $74104$ \\ 

 \hline
  $n$ & $849$ & $850$ & $851$ & $852$ & $853$ & $854$ & $855$ & $856$ & $857$ & $858$\\
  $b(n)$ & $2$ & $16$ & $1$ & $24$ & $1$ & $4$ & $14$ & $90$ & $1$ & $12$\\ 
  $s(n)$ & $8$ & $306$ & $1$ & $324$ & $1$ & $36$ & $80$ & $800$ & $1$ & $468$\\ 

 \hline
  $n$ & $859$ & $860$ & $861$ & $862$ & $863$ & $864$ & $865$ & $866$ & $867$ & $868$\\
  $b(n)$ & $1$ & $27$ & $2$ & $2$ & $1$ & $?$ & $1$ & $2$ & $5$ & $23$\\ 
  $s(n)$ & $1$ & $395$ & $8$ & $6$ & $1$ & $?$ & $1$ & $6$ & $26$ & $311$\\ 

 \hline
\end{tabular}
\hspace{1cm}
\caption{Further Computations}\label{Table1}
\end{small}
\end{table}

We now record some partial computations  considering specific additive groups of given orders.

\begin{table}[H]
\centering
\begin{small}
 \begin{tabular}{| c | c | c | c | c | c | c | c | c | c | c | c|} 
 \hline
 $Group \; Id$ & $[64,1]$ & $[64,2]$ & $[64,26]$ & $[64,50]$ & $[64,55]$ & $[64,83]$\\ 
  $Number$ & $10$ & $11354$ & $2742$ & $142$ & $?$ & $734410$\\ 
 \hline
 \hline 
  $Group \; Id$ & $[64,183]$ & $[64,192]$ & $[64,246]$ & $[64,260]$ & $[64,267]$ & \\
  $Number$ & $3124$ & $?$ & $253350$ & $2189661$ & $58558$ & \\
 \hline
\end{tabular}
\hspace{1cm}
\caption{Enumerations of left braces of order 64 }\label{Table1}
\end{small}
\end{table}

\begin{table}[H]
\centering
\begin{small}
 \begin{tabular}{| c | c | c | c | c | c | c | c | c | c | c | c|} 
 \hline
 $Group \; Id$ & $[480,4]$ & $[480,199]$ & $[480,212]$ & $[480,919]$ & $[480,934]$ & $[480,1180]$ & $[480,1213]$\\ 
  $Number$ & $128$ & $?$ & $4928$ & $958965$ & $99970$ & $?$ & $39650$\\ 
 \hline
\end{tabular}
\hspace{1cm}
\caption{Enumerations of left braces of order 480}\label{Table1}
\end{small}
\end{table}

\section{Conclusion and Conjectures}

We start by presenting a comparison on the time taken ( in seconds) by \cite[Algorithm 5.1]{GV2017} and Algorithm \ref{alg1} for enumerating skew left braces of  order 32 for select additive groups  which took considerable amount of time on MAGMA.

\begin{table}[H]
\centering
\begin{small}
 \begin{tabular}{| c | c | c | c | c | c | c | c | c | c | c | c|} 
 \hline
$Group \; Id \; of \;the \;additive \;group$ & $[32,23]$ & $[32,24]$ & $[32,25]$ & $[32,28]$ & $[32,29]$ & $[32,30]$\\
  $Number \;of  \;skew \;brace\; structures$ & $39488$ & $70400$ &$138336$ & $138336$ & $138336$ & $137526$\\ 
  $Time\; on  \; Algorithm \, 5.1   \;[13]$ & $11238$ & $9808$ &$18720$ & $10193$ & $10083$ & $34005$\\ 
  $Time \; on  \; Algorithm \; \ref{alg1}$ & $539$ & $709$ & $1905$ & $4308$ & $3135$ & $4658$\\
   \hline
   \hline
   $Group \; Id  \; of \;the \;additive \;group$ & $[32,31]$ & $[32,32]$ & $[32,33]$ & $[32,45]$ & $[32,47]$ & $[32,51]$\\
   $Number \;of  \;skew \;brace\; structures$ & $70944$ & $69236$ & $91008$ & $8015$ & $7870$ & $744$\\
    $Time\; on  \; Algorithm \, 5.1   \;[13]$ & $14568$ & $18342$ & $17222$ & $130942$ & $28848$ &$\#$\\ 
    $Time \; on  \; Algorithm \; \ref{alg1}$ & $4797$ & $9302$ & $8869$ & $30$ & $68$ & $8$\\
    \hline
\end{tabular}
\hspace{1cm}
\caption{Time comparison on skew left  braces of order 32}\label{Table1}
\end{small}
\end{table}

$\#$ Program was stopped after running more than a month without result.

\vspace{.1in}

The data obtained above reveals that Algorithm \ref{alg1} is very expensive, with respect to memory space and time, for handing the situation for prime power orders. So, one really needs to find a substitute for this algorithm. One may think of  Algorithm \ref{alg2} as a substitute. But unfortunately, it requires  the conjugacy classes of regular subgroups of a given Sylow-$p$-subgroup (of the holomorph of a given finite $p$-group) to be computed  in the whole holomorph, which is again very expensive. Although Algorithm \ref{alg2} is not very efficient as such, we hope that it may be improved/modified to handle the computations on skew braces  of prime power orders more efficiently.

We now present some conjectures suggested by the big data computed in above  tables. It is known from \cite{D2020} that for a prime integer $q \ge 5$,

$$
b(4q) =
\begin{cases}
9,   &\text{if} \;\; q \equiv 3 \mod{4}\\
11, &\text{if } \;q \equiv 1 \mod{4}
\end{cases}
$$

and for prime integers $p$ and $q$ such that $q > p+1 > 3$,

$$
b(p^2q) =
\begin{cases}
4,   &\text{if} \;\; 3 \nmid p-1\\
p+8,   &\text{if} \;\; 3 \mid p-1 \;\ \text{and}\;\ 9 \nmid p-1 \\
2p+8, &\text{if } \; 9 \mid p-1.
\end{cases}
$$

For  skew left braces, we have

\begin{conj} Let $p$ and $q$ be  prime integers. If $q \ge 5$, then
$$
s(4q) =
\begin{cases}
29,   &\text{if} \;\; q \equiv 3 \mod{4}\\
43, &\text{if } \; q \equiv 1 \mod{4}
\end{cases}
$$

and if $q > p+1 > 3$, then

$$
s(p^2q) =
\begin{cases}
4,   &\text{if} \;\; p \nmid q-1\\
2p^2+7p+8,   &\text{if} \;\; p \mid q-1 \;\ \text{and}\;\ p^2 \nmid q-1 \\
6p^2+6p+8, &\text{if } \; p^2 \mid q-1.
\end{cases}
$$
\end{conj}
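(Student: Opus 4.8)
We outline a plan for proving the conjecture, following the template by which the left-brace analogues $b(4q)$ and $b(p^2q)$ were obtained in \cite{D2020}. By Theorem~\ref{thm}, for any $n$ one has $s(n)=\sum_{G}N(G)$, where $G$ runs over the isomorphism types of groups of order $n$ and $N(G)$ is the number of conjugacy classes of regular subgroups of $\Hol(G)$. So the argument proceeds in four stages: (i) classify the groups of order $4q$, respectively $p^2q$, under the stated hypotheses; (ii) for each such $G$ write down $\Hol(G)=\Aut(G)\ltimes G$ explicitly; (iii) enumerate the regular subgroups of $\Hol(G)$ up to conjugacy; (iv) add the numbers $N(G)$. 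Since a skew left brace with abelian additive group is a left brace, we have $s(n)=b(n)+\sum N(G)$, the sum taken over the \emph{non-abelian} additive groups of order $n$, so stages (iii)--(iv) for those are the genuinely new part of the work.

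What makes this tractable is that $q$ is a \emph{large} prime relative to the rest of the order: it forces the Sylow $q$-subgroup of the additive group to be normal, and it keeps the $q$-part of $|\Hol(G)|$ to at most $q^2$. For order $p^2q$ with $q>p+1$, the prime $q$ divides neither $|\operatorname{GL}_2(\mathbb F_p)|=p(p-1)^2(p+1)$ nor $p(p-1)$ nor $q-1$, so $q\nmid|\Aut(G)|$ when $G$ is abelian, while for non-abelian $G$ a direct check shows the $q$-part of $|\Aut(G)|$ is exactly $q$; order $4q$ is analogous. Hence a regular subgroup $\mathcal G$ of $\Hol(G)$, being of order $4q$ (resp. $p^2q$), has a Sylow $q$-subgroup $Q$ of order exactly $q$ acting semiregularly on $G$, and the semiregular order-$q$ subgroups of $\Hol(G)$ fall into only a handful of $\Hol(G)$-conjugacy classes. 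After replacing $\mathcal G$ by a conjugate (using Lemma~\ref{key-lemma} where convenient) we may assume $Q$ equals one of finitely many fixed subgroups $Q_0$, whence $\mathcal G\le\N_{\Hol(G)}(Q_0)$. This reduces stage~(iii) to listing the regular subgroups inside the much smaller groups $\N_{\Hol(G)}(Q_0)$ and counting their orbits under $\Aut(G)\cap\N_{\Hol(G)}(Q_0)$ --- essentially the finite orbit-counting problem carried out in \cite{D2020} for the abelian additive groups.

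The case divisions are forced by elementary group theory and explain the shape of the answer. For order $4q$ the Sylow $q$-subgroup is normal, so every group is $\mathbb Z_q\rtimes P$ with $|P|=4$; there are four isomorphism types ($\mathbb Z_{4q}$, $\mathbb Z_2^2\times\mathbb Z_q$, the dihedral $D_{4q}$ and the dicyclic $\mathrm{Dic}_q$) when $q\equiv 3\pmod 4$, and a fifth one, $\mathbb Z_q\rtimes\mathbb Z_4$ with faithful $\mathbb Z_4$-action, exactly when $4\mid q-1$ --- which is the ultimate source of the split $29$ versus $43$. For order $p^2q$ with $q>p+1$ the Sylow $q$-subgroup is again normal and every group is $\mathbb Z_q\rtimes P$ with $|P|=p^2$ acting through a cyclic quotient; the number of isomorphism types is governed by $\gcd(p^2,q-1)\in\{1,p,p^2\}$, giving $2$, $4$, and $5$ additive groups, which is the origin of the three branches $4$, $2p^2+7p+8$, $6p^2+6p+8$. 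In the first branch all additive groups are abelian, so $s(p^2q)=b(p^2q)=4$ by \cite{D2020}; in the other two one must add to $b(p^2q)$ the contributions $N(G)$ of the non-abelian additive groups $\mathbb Z_q\rtimes\mathbb Z_{p^2}$ with $\mathbb Z_{p^2}$ acting through its order-$p$ quotient (and, when $p^2\mid q-1$, also with $\mathbb Z_{p^2}$ acting faithfully) and $(\mathbb Z_q\rtimes\mathbb Z_p)\times\mathbb Z_p$.

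I expect stage~(iii) to be the main obstacle, specifically for the additive groups whose automorphism group contains a $\operatorname{GL}_2$-type factor: this includes $\mathbb Z_p\times\mathbb Z_p\times\mathbb Z_q$, already the hardest case in \cite{D2020}, and, newly, $(\mathbb Z_q\rtimes\mathbb Z_p)\times\mathbb Z_p$, whose Sylow $p$-subgroup is $\mathbb Z_p^2$ and whose automorphisms induce a Borel-type subgroup of $\operatorname{GL}_2(\mathbb F_p)$. For these, the regular subgroups split into families parametrized by lines and flags in $\mathbb F_p^2$ together with characters of $\mathbb Z_q$, and the delicate point is extracting the exact number of orbits --- this is what produces the $p^2$- and $p$-coefficients --- together with verifying that no accidental isomorphism collapses two of these classes for any prime $q$ in the stated range. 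The remaining non-abelian additive groups ($\mathbb Z_q\rtimes\mathbb Z_{p^2}$ for $p^2q$, and $D_{4q}$, $\mathrm{Dic}_q$, $\mathbb Z_q\rtimes\mathbb Z_4$ for $4q$) have much smaller automorphism groups, of order a small multiple of $q(q-1)$ with no large matrix part, and should be routine, contributing the constant terms and a few lower-order corrections. Throughout I would cross-check the emerging formulas against the tabulated values --- $s(171)=80$, $s(275)=93$, $s(279)=47$, $s(164)=43$, $s(172)=29$, $s(212)=43$, $s(268)=29$ --- at each step, to catch bookkeeping errors before committing to the general statement.
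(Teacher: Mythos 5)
First, a point of orientation: the statement you are proving is presented in the paper as a \emph{conjecture}, extracted from the computed tables; the paper contains no proof of it, so there is no argument of the authors to compare yours against. The question is therefore only whether your proposal itself establishes the formulas, and it does not. What you have is a correct and sensible reduction --- the classification of the additive groups of order $4q$ and $p^2q$ under the stated hypotheses is right (including the counts $4/5$ and $2/4/5$ of isomorphism types and the identification $s(n)=b(n)+\sum_G N(G)$ over non-abelian $G$, with $b$ taken from Dietzel), and the observation that the $q$-part of $\Hol(G)$ is small, so that a regular subgroup has a normal Sylow $q$-subgroup which may be conjugated into a fixed $Q_0$, correctly reduces the problem to counting regular subgroups of $\operatorname{N}_{\Hol(G)}(Q_0)$ up to the appropriate conjugation. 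But this is exactly where the proposal stops. The entire content of the conjecture --- the constants $29$, $43$ and the polynomials $2p^2+7p+8$, $6p^2+6p+8$ --- can only come from the orbit counts in stage~(iii), and you never carry these out for a single additive group; you yourself flag the $\operatorname{GL}_2$-type cases ($\mathbb{Z}_p\times\mathbb{Z}_p\times\mathbb{Z}_q$ and $(\mathbb{Z}_q\rtimes\mathbb{Z}_p)\times\mathbb{Z}_p$) as the ``main obstacle'' and describe only what a parametrization \emph{would} look like. Checking the plan against the tabulated values $s(171)$, $s(275)$, $s(279)$, etc.\ is consistency testing of the conjecture, not proof.

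So the gap is concrete: no enumeration of conjugacy classes of regular subgroups is actually performed for any of the non-abelian additive groups (nor is the abelian case re-derived beyond citing \cite{D2020} for $b$), hence none of the claimed closed-form values is obtained. A genuine proof would have to do, for each of the finitely many additive groups $G$ in each congruence branch, the explicit normalizer/orbit computation you sketch --- uniformly in $q$ (and $p$), with an argument ruling out coincidental fusion of classes for special primes --- and then sum the contributions to recover the stated formulas. As it stands, your text is a reasonable research programme (essentially the route one would expect, modelled on the left-brace computation of \cite{D2020} together with Theorem \ref{thm} and Lemma \ref{key-lemma}), but the statement remains unproved both in the paper and in your proposal.
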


For prime multiples of  $8$ and $12$, we have

\begin{conj} Let $p \ge 11$ be a prime integer. Then
$$
b(8p) =
\begin{cases}
90,   &\text{if} \;\; p \equiv 3,\;7 \mod{8}\\
106,   &\text{if} \;\; p \equiv 5 \mod{8}\\
108, &\text{if } \; p \equiv 1 \mod{8}.
\end{cases}
$$

and

$$
s(8p) =
\begin{cases}
800,   &\text{if} \;\; p \equiv 3,\;7 \mod{8}\\
944,   &\text{if} \;\; p \equiv 5 \mod{8}\\
986, &\text{if } \; p \equiv 1 \mod{8}.
\end{cases}
$$
\end{conj}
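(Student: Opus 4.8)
The plan is to invoke Theorem~\ref{thm} and reduce the statement to a count, uniform in $p$, of conjugacy classes of regular subgroups of $\Hol(G)$, where $G$ ranges over the groups of order $8p$ — over all of them for $s(8p)$ and over the abelian ones for $b(8p)$. So the first task is to list the additive groups. Since $p \ge 11$ is prime, Sylow's theorems force a unique, hence characteristic, Sylow $p$-subgroup $P \cong C_p$, so $G = C_p \rtimes Q$ with $Q$ of order $8$ acting on $C_p$ through a cyclic $2$-group whose order $d$ divides $\gcd(8,p-1)$; the abelian $G$ are exactly the three direct products $C_8 \times C_p$, $(C_4 \times C_2) \times C_p$, $C_2^3 \times C_p$ (trivial action and $Q$ abelian). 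The value of $\gcd(8,p-1)$ is $2$, $4$, or $8$ according as $p \equiv 3,7$, or $p \equiv 5$, or $p \equiv 1 \pmod 8$, which is precisely the case division in the statement; in each regime one enumerates the admissible pairs $(Q,\text{action})$.

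Next I would carry out the structural reduction, following the strategy of \cite{D2020}. A regular subgroup $\mathcal{G} \le \Hol(G)$ has order $8p$ with $p > 8$, so it has a unique normal Sylow $p$-subgroup $\mathcal{P}$ and, by Schur--Zassenhaus, a complement $\mathcal{R}$ of order $8$. Because $P \le G$ is characteristic it is normal in $\Hol(G)$, and using this together with the projection $\pi_2 : \Hol(G) \to G$ one pins down $\mathcal{P}$: it must map onto $P$ and sit inside the preimage of $P$. For the additive groups that are direct products $C_p \times Q$ one has, since $\gcd(p,8)=1$, $\Aut(C_p \times Q) = \Aut(C_p) \times \Aut(Q)$ and hence $\Hol(C_p \times Q) \cong \Hol(C_p) \times \Hol(Q)$; a Goursat-type analysis then shows that a regular subgroup of this direct product is determined by a regular subgroup of $\Hol(C_p)$ (there is just one up to conjugacy), a regular subgroup of $\Hol(Q)$ (these encode the skew braces of order $8$ with additive group $Q$), and a ``gluing'' datum, namely a surjection onto a common cyclic $2$-group whose order divides $\gcd(8,p-1)$ — which is exactly where the dependence on $p \bmod 8$ enters. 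For the non-split additive groups $C_p \rtimes Q$ one runs the analogous matched-product decomposition inside $\Hol(C_p \rtimes Q)$, the action of $Q$ on $C_p$ serving as the compatibility constraint.

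Having parametrised all regular subgroups, I would count their $\Hol(G)$-conjugacy classes — equivalently, by Theorem~\ref{thm}, the isomorphism classes of the resulting skew braces — summing over all $G$ of order $8p$ for $s(8p)$ and over the three abelian $G$ for $b(8p)$. The bookkeeping splits into (i) the order-$8$ brace data, which contributes a fixed finite amount, and (ii) the gluing data together with its orbits under the pertinent automorphisms, a count that is constant throughout each of the three regimes. One then checks that the totals come out to $90,106,108$ and $800,944,986$, independently of $p$ within a regime and of all further congruences of $p$; in particular one must verify that powers of $2$ in $p-1$ beyond $8$, and the prime $3$ when $3 \mid p-1$ (which enters through $\Aut(Q_8) \cong S_4$ and through the factor $C_{p-1} \le \Hol(C_p)$), produce no extra classes, the point being that the corresponding elements act on a single tensor factor and are already absorbed into the order-$8$ count.

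The main obstacle, I expect, is twofold. First, the non-split semidirect products $G = C_p \rtimes Q$: here $p$ can divide $|\Aut(G)|$ (for instance $G = D_{8p}$, where $|\Aut(G)| = 8p(p-1)$), so the Sylow $p$-subgroup of $\Hol(G)$ is no longer a cyclic group of order $p$, and locating $\mathcal{P}$ and its admissible complements demands genuine care rather than the clean splitting of the direct-product case. Second, showing that the orbit count is \emph{exactly} the asserted constant — not merely bounded by it — for every $p \ge 11$ in a fixed residue class amounts to ruling out sporadic extra regular subgroups and sporadic fusions of conjugacy classes; this is the delicate, case-heavy part, and the step where one most needs to lean on (and perhaps refine) the reduction machinery of \cite{D2020} to keep the whole argument uniform in $p$.
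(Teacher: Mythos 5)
You have set out to prove a statement that the paper itself does not prove: it is presented there as a conjecture, supported only by the machine computations in the tables, which were produced by applying Theorem~\ref{thm} and Algorithm~\ref{alg1} to the individual orders $8p\le 868$ (e.g.\ $n=136,152,184,232,248,296,328$, all consistent with the asserted pattern). So there is no paper proof to compare against, and your text does not close the gap either: it is a programme rather than a proof. The genuinely mathematical content --- showing that, for every prime $p\ge 11$ in a fixed residue class mod $8$, the number of conjugacy classes of regular subgroups, summed over the relevant additive groups $G$ of order $8p$, equals exactly $90,106,108$ (abelian $G$) and $800,944,986$ (all $G$) --- is exactly the part you defer. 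Concretely: (i) for the three abelian additive groups the Goursat/gluing analysis of regular subgroups of $\Hol(C_p)\times\Hol(Q)$ and the count of their orbits under $\Aut(C_p)\times\Aut(Q)$ is announced but never carried out; (ii) for the non-split additive groups $C_p\rtimes Q$ (whose number itself varies with $p\bmod 8$, so the sum defining $s(8p)$ ranges over a $p$-dependent list of groups) you yourself flag that $p$ may divide $|\Aut(G)|$, as for $D_{8p}$, so the clean product decomposition fails and no replacement argument is given; and (iii) the uniformity claim --- no sporadic extra regular subgroups and no sporadic fusion of conjugacy classes for special $p$, e.g.\ when higher powers of $2$ or the prime $3$ divide $p-1$ --- is asserted to be checkable but not checked, and it is precisely the point of the conjecture.

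The strategy itself is reasonable and is the natural route by which such formulas have been established in the literature: Theorem~\ref{thm} (equivalently \cite[Proposition 4.3]{GV2017}) reduces everything to conjugacy classes of regular subgroups of holomorphs, the unique normal Sylow $p$-subgroup gives the case split by $\gcd(8,p-1)$, and the analogous counts for orders $p^2q$ and $pq$ were proved in \cite{D2020} and \cite{AB20} by just such a matched-product/complement analysis. If you execute the enumeration in (i)--(iii) you would in fact be settling the conjecture, which would go beyond what the paper does; as it stands, however, your proposal reproduces the heuristic case division that motivated the conjecture without establishing any of the six claimed values.
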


\begin{conj} Let $p \ge 7$ be a prime integer. Then
$$
b(12p) =
\begin{cases}
24,   &\text{if} \;\; p \equiv 11 \mod{12}\\
28,   &\text{if} \;\; p \equiv 5 \mod{12}\\
34,   &\text{if} \;\; p \equiv 7 \mod{12}\\
40, &\text{if } \; p \equiv 1 \mod{12}.
\end{cases}
$$

and

$$
s(12p) =
\begin{cases}
324,   &\text{if} \;\; p \equiv 11 \mod{12}\\
410,   &\text{if} \;\; p \equiv 5 \mod{12}\\
606,   &\text{if} \;\; p \equiv 7 \mod{12}\\
782, &\text{if } \; p \equiv 1 \mod{12}.
\end{cases}
$$
\end{conj}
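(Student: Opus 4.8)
The plan is to reduce everything, via Theorem~\ref{thm}, to a regular‑subgroup count: for a fixed prime $p\ge 7$, $b(12p)$ equals the sum of $r(G)$ over the abelian groups $(G,+)$ of order $12p=2^2\cdot 3\cdot p$, and $s(12p)$ equals the same sum over all groups of that order, where $r(G)$ denotes the number of conjugacy classes of regular subgroups of $\Hol(G,+)$. So I would begin by listing the isomorphism types of groups of order $12p$; this is a classical computation whose output depends only on the residue of $p$ modulo $12$ — equivalently on which of $2,3,4$ divide $p-1$ — and which in the abelian case returns exactly the two groups $C_{12p}$ and $C_2\times C_{6p}$. The hypothesis $p\ge 7$ already matters here: for $p=5$ the simple group $A_5$ is a group of order $60$ whose Sylow $5$‑subgroup is not normal, which is precisely what would break the reduction below.

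Next I would prove that for $p\ge 7$ the Sylow $p$‑subgroup $P$ of any skew left brace $(G,+,\circ)$ of order $12p$ is an ideal. It is the unique Sylow $p$‑subgroup of $(G,+)$, hence characteristic there, so in particular $\lambda$‑invariant, since each $\lambda$‑map is an automorphism of $(G,+)$; being $\lambda$‑invariant, $P$ is a sub‑skew‑brace, so $(P,\circ)$ is a subgroup of order $p$ of $(G,\circ)$, i.e.\ a Sylow $p$‑subgroup, and it is normal in $(G,\circ)$ — immediately when $p\neq 11$, and for $p=11$ by Burnside's normal $p$‑complement theorem together with the fact that no group of order $12$ has an automorphism of order $11$. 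Thus every such brace is an extension $0\to P\to G\to \overline{G}\to 0$ of the trivial brace $P\cong C_p$ by a skew left brace $\overline{G}$ of order $12$. Since $\gcd(p,12)=1$, Schur–Zassenhaus splits both the additive and the multiplicative extension, so the datum is a skew left brace $\overline{G}$ of order $12$ equipped with a compatible pair of homomorphisms $\overline{G}_+\to\Aut(C_p)$ and $\overline{G}_\circ\to\Aut(C_p)$, plus a bounded amount of cocycle data. As $\Aut(C_p)\cong C_{p-1}$ is cyclic, only $\gcd(12,p-1)$ enters, and $\gcd(12,p-1)\in\{12,4,6,2\}$ exactly according as $p\equiv 1,5,7,11\bmod 12$: this is the source of the four congruence cases.

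It then remains to carry out a finite case analysis. For each of the (few) skew left braces $\overline{G}$ of order $12$ — respectively each of the left braces of order $12$ when computing $b$ — I would enumerate the pairs of homomorphisms into $C_{\gcd(12,p-1)}$ that assemble into an honest brace, count them modulo the action of $\Aut(\overline{G})$, and correct for the coincidences in which inequivalent extension data produce isomorphic braces. Done uniformly in $p$, this shows that $b(12p)$ and $s(12p)$ depend only on $p\bmod 12$ for all $p\ge 7$; the constants $24,28,34,40$ and $324,410,606,782$ are then obtained either by completing the order‑$12$ enumeration by hand or — once uniformity is established — simply by reading off one prime in each class, for instance $p=23,17,19,37$, whose values $b(276)=24$, $b(204)=28$, $b(228)=34$, $b(444)=40$ and the corresponding $s$‑values occur in the tables above.

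The main obstacle is the last step. Establishing that $P$ is an ideal is routine (the transfer argument for $p=11$ being the only subtlety there), but the isomorphism reduction on the extension data is delicate: it requires an explicit description, for each order‑$12$ brace $\overline{G}$, of $\Aut(\overline{G})$ and of how it interacts with the $\lambda$‑action on $\overline{G}$ itself, and several of these braces have automorphism groups large enough that deciding which pairs of compatible actions fall into the same orbit is genuinely fiddly; the non‑round values in the statement leave no margin for an off‑by‑a‑few error. As an independent check I would transport the whole problem to Hopf–Galois structures on groups of order $12p$ and adapt the arguments already used for the orders $4q$ and $p^2q$ in \cite{D2020}.
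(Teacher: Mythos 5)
There is nothing in the paper for your argument to be compared against: the statement you are addressing is stated as a \emph{conjecture}, extracted from the computational tables (the entries at $n=84,132,204,228,276,372,444,\dots$ produced by Algorithm \ref{alg1}), and the authors offer no proof. Your proposal does not close that gap. The reduction you sketch --- via Theorem \ref{thm}, the normality of the Sylow $p$-subgroup (including the transfer argument at $p=11$), and Schur--Zassenhaus --- is plausible and is the natural line of attack, but the decisive step is exactly the one you defer: classifying, up to isomorphism of skew left braces, the extensions of the trivial brace $C_p$ by each skew left brace of order $12$, i.e.\ the compatible pairs of actions of $(\overline{G},+)$ and $(\overline{G},\circ)$ on $C_p$ together with the cocycle data, modulo the relevant equivalence. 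You assert that this count depends only on $\gcd(12,p-1)$ and is uniform in $p$, but you do not prove either claim, and the compatibility conditions (the extension must again satisfy the brace axiom, not merely give group extensions of $(G,+)$ and $(G,\circ)$ separately) and the identification of isomorphic outputs are precisely where such arguments become delicate; you acknowledge this yourself.

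Moreover, the way you propose to pin down the constants --- ``reading off one prime in each class'' from the tables --- is circular as a proof of this statement: those table entries are the empirical evidence on which the conjecture rests, so verifying four instances against them establishes nothing beyond what the paper already reports. To turn your outline into a proof you would need (i) a complete, hand-checkable list of the skew left braces (respectively left braces) of order $12$ with their automorphism groups and $\lambda$-actions, (ii) a precise extension/cohomological framework for braces with kernel $C_p$ and a proof that the resulting count depends only on $\gcd(12,p-1)$, and (iii) the explicit evaluation of that count in the four congruence classes, obtained independently of the tables. As it stands, the proposal is a reasonable research programme, of the kind carried out in \cite{D2020} for $4q$ and $p^2q$ and in \cite{AB20} for $pq$, but not a proof of the conjectured formulas.
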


Skew left braces of order $pq$, $p < q$ being prime integers, have been constructed very recently in \cite{AB20}, where it is shown that $s(pq) = 1$ if $p \nmid q-1$ and $s(pq) = 2p+2$ otherwise. Going a step ahead, we have the following enumeration formula:

\begin{conj} Let $p$ and $q$ be  prime integers such that $q > p \ge 3$. Then
$$
b(2pq) =
\begin{cases}
4,   &\text{if} \;\; p \nmid q-1\\
6, &\text{if } \; p \mid q-1
\end{cases}
$$

and 

$$
s(2pq) =
\begin{cases}
36,   &\text{if} \;\; p \nmid q-1\\
8p+54,   &\text{if} \;\; p \mid q-1.
\end{cases}
$$
\end{conj}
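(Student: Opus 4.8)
The plan is to combine Theorem~\ref{thm} with a classification of the groups of order $2pq$. By Theorem~\ref{thm}, $b(2pq)$ counts the conjugacy classes of regular subgroups of $\Hol(C_{2pq})$ and $s(2pq)$ counts, over all groups $G$ of order $2pq$, the conjugacy classes of regular subgroups of $\Hol(G)$; here we use that $2pq$ is squarefree, so the only abelian group of that order is cyclic and every \emph{left} brace of order $2pq$ has additive group $C_{2pq}$.

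First I would classify the groups of order $2pq$, $3\le p<q$. A short Sylow-counting argument shows the Sylow $q$-subgroup is always normal (the one delicate case $q=2p-1$ is excluded because $n_p$ would then have no admissible value), so $G=C_q\rtimes H$ with $H\cong C_{2p}$ or $H\cong D_p$, and the action $H\to\Aut(C_q)\cong C_{q-1}$ has cyclic image of order dividing $\gcd(2p,q-1)$, which is $2$ if $p\nmid q-1$ and $2p$ if $p\mid q-1$ (as $q-1$ is even); if $H\cong D_p$ the image, being abelian, has order dividing $2$. Since $C_{q-1}$ has a unique subgroup of each order, enumerating the admissible homomorphisms yields exactly $4$ groups when $p\nmid q-1$, namely $C_{2pq}$, $D_p\times C_q$, $D_q\times C_p$, $D_{pq}$, and exactly $6$ when $p\mid q-1$, the extra two being $C_2\times(C_q\rtimes C_p)$ and the Frobenius group $C_q\rtimes C_{2p}$ with faithful action. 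In particular the conjectured value of $b(2pq)$ equals the number of groups of order $2pq$, so the left-brace part of the conjecture reduces to showing that each group of order $2pq$ is the multiplicative group of exactly one left brace with additive group $C_{2pq}$.

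For the left braces I would use the isomorphism $\Hol(C_{2pq})\cong\Hol(C_2)\times\Hol(C_p)\times\Hol(C_q)\cong C_2\times\mathrm{AGL}(1,p)\times\mathrm{AGL}(1,q)$, valid because $2,p,q$ are pairwise coprime. Its Sylow $q$-subgroup is the translation subgroup $C_q$, which is normal, so every regular subgroup $\mathcal G$ contains it; this reduces the description of $\mathcal G$ to a bounded amount of data about how its images in $\mathrm{AGL}(1,p)$ and in the quotient $C_{q-1}$ are arranged, an explicit finite calculation that should return exactly one conjugacy class per isomorphism type above. For the skew-brace count one repeats the analysis with $C_{2pq}$ replaced by each non-abelian additive group; here $\Hol(G)$ is larger and its Sylow structure at $q$ less rigid, but one can still use that the additive and the multiplicative group both lie in the list just obtained --- each carrying a normal $C_q$, since all groups of order $2pq$ do --- and organise the enumeration relative to the known counts of skew braces of orders $2p$ and $pq$ (see \cite{AB20}) and of left braces (see \cite{D2020}), adding the $1$-cocycle data supported on the normal $C_q$.

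The main obstacle will be making this analysis uniform in $p$ and $q$. In detail: (i)~the non-abelian additive groups $D_{pq}$, $D_p\times C_q$, $D_q\times C_p$ and, when $p\mid q-1$, $C_2\times(C_q\rtimes C_p)$ and $C_q\rtimes C_{2p}$ have bigger holomorphs whose regular subgroups must be enumerated up to conjugacy with care; (ii)~one must show the dependence on $p$ is exactly the affine function $8p+54$, the linear term arising from the configurations that require $p\mid q-1$ (governed by the order-$p$ part of $C_{q-1}$); and (iii)~one must exclude sporadic regular subgroups for the arithmetically special pairs, such as $q=2p\pm1$ or very small $p$, so that the totals are precisely $36$, $8p+54$ and the stated values $4,6$ of $b(2pq)$ with no exceptions. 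Settling (i)--(iii) together with the classification above would complete the proof.
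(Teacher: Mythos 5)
You should first be aware that the paper does not prove this statement at all: it is stated as a conjecture, extracted from the computational data in the tables (and consistent with the known results cited there for $s(pq)$ in \cite{AB20} and for $b$ in \cite{D2020}), so there is no proof in the paper to compare yours against. Judged on its own terms, your proposal is a reasonable plan whose preparatory part is essentially correct: for $3\le p<q$ the Sylow $q$-subgroup of a group of order $2pq$ is indeed always normal, and the classification you sketch (the four groups $C_{2pq}$, $D_p\times C_q$, $D_q\times C_p$, $D_{pq}$, plus $C_2\times(C_q\rtimes C_p)$ and the Frobenius group $C_q\rtimes C_{2p}$ exactly when $p\mid q-1$) agrees with H\"older's count $4+2\cdot[p\mid q-1]$; likewise the reduction via Theorem \ref{thm} to counting conjugacy classes of regular subgroups of $\Hol(G)$ for each of these additive groups, and the observation that the Sylow $q$-subgroup of $\Hol(C_{2pq})$ is normal and hence contained in every regular subgroup, are sound.

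The genuine gap is that the statement itself is never established. For the left braces you reduce the claim to ``each group of order $2pq$ is the multiplicative group of exactly one left brace with additive group $C_{2pq}$,'' but you do not prove this uniqueness or existence; you only assert that an ``explicit finite calculation should return exactly one conjugacy class per isomorphism type.'' For the skew braces the situation is worse: the numbers $36$ and $8p+54$ can only come from enumerating, uniformly in $p$ and $q$, the conjugacy classes of regular subgroups in the holomorphs of the five or six non-cyclic additive groups, and this enumeration — in particular isolating the configurations that contribute the linear term $8p$ and ruling out sporadic regular subgroups for arithmetically special pairs such as $q=2p\pm1$ — is exactly what you defer to obstacles (i)--(iii) and leave unresolved. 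So what you have is a plausible strategy and a correct reduction, not a proof; the heart of the conjecture (which is why the paper leaves it as a conjecture) remains open in your write-up as well.
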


We close with the hope that the readers will be able to use the enormous data produced above to formulate many more conjectures according to their own need and interest.
\vspace{.2in}

\noindent{\it Acknowledgements.}  The third named author thanks L. Vendramin for supplying MAGMA codes for computing skew left braces and for his useful comments on the introduction, and  acknowledges the support of DST-RSF Grant INT/RUS/RSF/P-2. The first and second named authors acknowledge the support from the RFBR-18-01-0057. The authors thank the referee for suggesting useful modifications.

\end{document}